\def\uwave{\bgroup \markoverwith{\lower3.5\p@\hbox{\sixly \textcolor{red}{\char58}}}\ULon}
\font\sixly=lasy6
\DeclareMathSymbol{\R}{\mathbin}{AMSb}{"52}
\renewcommand{\R}{\mathbb{R}}
\DeclareMathSymbol{\N}{\mathbin}{AMSb}{"4E}
\DeclareMathSymbol{\Z}{\mathbin}{AMSb}{"5A}
\newtheorem{lemma}{Lemma}
\newtheorem{theorem}{Theorem}
\begin{document}

 \title{On a Generalized Fibonacci Recurrence}
 
\author{Natasha Blitvi\'{c}}
\affil{Department of Mathematics and Statistics\\ Lancaster University}

\author{Vicente I. Fernandez}

\affil{Department of Civil, Environmental and Geomatic Engineering\\ ETH Z\"urich}

\maketitle

\begin{abstract}
The generalized Fibonacci recurrence
$g_n=g_{n-k}+g_{n-m}$
was recently used to demonstrate the theoretically optimal nature of limited senescence in morphologically symmetrically dividing bacteria. Here, we study this recurrence from a more abstract viewpoint, as a general model for asymmetric branching, and interpret solutions for different initial conditions in terms of branching-related quantities.  We provide a compact diagrammatic representation for the evolution of this process which leads to an explicit binomial identity for the sums of elements lying on the diagonals $kx+my=n$ in Pascal's triangle $\N_0\times \mathbb N_0\ni(x,y)\mapsto {x+y\choose x}$, previously sought by Dickinson~\cite{Dickinson1950}, Raab~\cite{Raab1963}, and Green~\cite{Green1968}.
\end{abstract}

\section{Introduction}

Motivated by an active debate in the microbiology community concerning whether senescence occurs in unstressed morphologically symmetrically dividing bacteria such as \emph{Escherichia coli} \cite{stewart2005aging, wang2010robust, rang2012ageing} and questions over the possible benefits of two very similar growth equilibria \cite{proenca2018age} achieved through asymmetric partitioning at division, we recently utilized in \cite{BlitvicFernandez} the linear recurrence
\begin{equation}g_n=g_{n-k}+g_{n-m}\tag{GF}\label{eq-GF}\end{equation}
as a model for asymmetric microbial division.  Senescence in morphologically symmetrically dividing bacteria is of particular interest to the evolution of ageing \cite{ackermann2007evolutionary, chao2010model}, where the development of physical asymmetries between parent and child organisms is an important step.  We showed that \eqref{eq-GF} can be interpreted in terms of a generalized Bellman-Harris process with non-identically distributed offspring (accounting for the division asymmetry) in scenarios where the variance of the distributions governing the two lifetimes is vanishingly small.
Expressing the asymptotics of the solutions of \eqref{eq-GF} in terms of the unique positive root of the characteristic equation
$1-x^k-x^m=0$
yielded the corresponding growth rate of bacterial populations exhibiting bimodal features.
  In \cite{BlitvicFernandez}, we used this model to examine the benefits of small growth asymmetries when dividing and identified conditions under which the observed small ($\leq5$\% \cite{lapinska2019bacterial, proenca2018age, stewart2005aging}) asymmetries in \emph{Escherichia coli} would be optimal, providing a meaningful fitness advantage over symmetric division.  

At present, we take a step back and study \eqref{eq-GF} as a \emph{generalized Fibonacci recurrence}.
Over the years, various generalizations of Fibonacci numbers have been proposed and investigated, including, notably, Lucas' 1878 study of the recurrence $g_n=ag_{n-1}+bg_{n-2}$ (for fixed $a,b\in\mathbb N$ and initial conditions $g_0,g_1\in\mathbb N_0$) ~\cite{Lucas1878}. More generally, given some some fixed coefficients $\alpha_1,\ldots,\alpha_\ell$ and initial values $u_0,\ldots,u_{\ell-1}$, the sequence $(u_n)$ satisfying \begin{equation}
u_n=\alpha_1 u_{n-1}+\alpha_2 u_{n-2}+\ldots+\alpha_\ell u_{n-\ell}
\label{eq-gen-rec}\end{equation}
is equivalently determined (see e.g. \cite{Levesque1985}) by the generating function   \begin{equation}
\sum_{n=0}^\infty u_n x^n=\frac{u_0+u_1x+\ldots+u_{\ell-1}x^{\ell-1}}{1-\alpha_1x-\alpha_2x^2-\ldots-\alpha_\ell x^\ell}.
\end{equation}
Standard analysis methods yield an explicit formula for $(u_n)$ in terms of the roots of the characteristic polynomial $1-\alpha_1x-\alpha_2x^2-\ldots-\alpha_\ell x^\ell$, which gives a generalization of the \emph{Binet formula} for Fibonacci numbers. Further algebraic properties of this general class of sequences and of their associated objects (such as companion matrices) have been studied (see e.g. \cite{Dubeau1997,Chen1996,BenTaher2003,Schork2007,Yang2008}).

While some unifying insight can be obtained by treating generalized Fibonacci numbers as a special case of the general recurrence \eqref{eq-gen-rec},  different specializations come with distinct combinatorial and geometric insights, as evidenced by the extensive study of the \emph{Lucas numbers} ($l_0=2$, $l_1=1$, $ l_n=l_{n-1}+l_{n-2}$),  \emph{Pell numbers} ($p_0=0$, $p_1=1$, $ p_n=2p_{n-1}+p_{n-2}$), and  \emph{Padovan numbers}  ($q_0=q_1=q_2=1$, $q_n=q_{n-2}+q_{n-3}$).
Further specializations include the case  $\alpha_{\ell-1}=\alpha_\ell=1$ of \cite{Conolly1981} and 
$\alpha_1=\ldots=\alpha_\ell=1$ of \cite{Miles1960} (see also \cite{Goyt2013} for a more combinatorial perspective), which includes the so-called tribonacci ($\ell=3$) and quadranacci ($\ell=4$) sequences. 

The generalized Fibonacci recurrence \eqref{eq-GF}, considered at present, was previously introduced in \cite{Dickinson1950} (see also \cite{Raab1963,Green1968}), motivated by the combinatorial problem of characterizing the sums of terms appearing on diagonals of Pascal's triangle. (More recently, \cite{Northshield2011} studied the `continuous version' of the same problem.) For special values of $k$ and $m$ and specific initial values, solutions of \eqref{eq-GF} have previously appeared in various combinatorial and algebraic settings (see e.g. the references for the entry
A005686 in \cite{oeis} for the case $k=2,m=5$). The case $k=1$ has previously attracted attention in computer science \cite{Peterson1977} and biology, especially in the context of asymmetric cell division in yeast \cite{Spears1998, Olofsson2011}. 

In this work, we develop further mathematical properties of \eqref{eq-GF} as a general model for asymmetric branching. We give a more concrete combinatorial interpretation of the sequences arising from \eqref{eq-GF}, for $k,m\in\mathbb N$ (assumed to be relatively prime), under different initial conditions, and assign to these a convenient diagrammatic representation in terms of quantities appearing in a deformed version of Pascal's triangle (the \emph{$(k,m)$-Pascal's triangle}).   Exploiting the relations between these quantities, we obtain an explicit expression for the sums of terms on the diagonals of classical Pascal's triangle, given as a sum of binomial coefficients without relying on integer solutions to the Diophantine equation $kx+my=n$. This provides a missing piece in the earlier work of \cite{Dickinson1950}, Raab \cite{Raab1963} and Green \cite{Green1968}. 


\section{Generalized Fibonacci numbers as a model for asymmetric branching}

Consider the model where a bacterium is born at time $0$ and undergoes its first division at time $m$. The time from birth until division is the \emph{lifetime} of the bacterium. Starting with this division and for all divisions thereafter, each bacterium in the system splits into a ``faster-growing" child that will undergo division at time $k$ and ``slower-growing" one, which will undergo division at time $m>k$. (See Figure~\ref{fig-tree-Pascal} (a).) This model accurately describes the bimodal morphologically stable asymmetric cell division (see \cite{BlitvicFernandez} for some examples) and is a generalization of analogous models for the division of yeast \cite{Olofsson2011} (case $k=1$). Beyond biology, the model is more generally applicable to any situation that exhibits binary branching with persistent asymmetry.




Denote by $a_n^{(k,m)}$, for $n=0,1,2,\ldots$, the total number of bacteria at time $t\in [n,(n+1))$, by $b^{(k,m)}_n$ and $c^{(k,m)}_n$ the number of those bacteria with lifetimes $k$ and $m$, respectively, and by $d_n^{(k,m)}$  the number of simultaneous divisions occurring at any time $n$. In \cite{BlitvicFernandez}, we pointed out that, analogously to the relationship between the Fibonacci and Lucas numbers, the aforementioned biologically motivated sequences all satisfy \eqref{eq-GF}, starting from different initial values.  Namely:

\begin{theorem}[Supplemental Materials to \cite{BlitvicFernandez}]
The following sequences are solutions to \eqref{eq-GF}:
\begin{enumerate}[noitemsep]
 \item $(a_n^{(k,m)})_n$, the number of bacteria alive in the time interval $[n,(n+1))$, subject to the initial condition $a_0^{(k,m)}=1=\ldots=a_m^{(k,m)}$;
 \item $(d_n^{(k,m)})_n$, the number of bacterial divisions occurring at time $n$, with the initial condition $d_n^{(k,m)}=0=\ldots=d_{m-1}^{(k,m)},d_{m}^{(k,m)}=1$;
 \item $(b_n^{(k,m)})_n$ the number of bacteria of lifetime $k$, alive in the time interval $[n,(n+1))$, with the initial condition $b^{(k,m)}_0=\ldots=b^{(k,m)}_{m-1}=0$ and $b^{(k,m)}_{m}=\ldots=b^{(k,m)}_{m+k-1}=1$;
 \item $(c_n^{(k,m)})_n$, the number of bacteria of lifetime $m$, alive in the time interval $[n,(n+1))$, with the initial condition $c^{(k,m)}_0=\ldots=c^{(k,m)}_{m+k-1}=1$.\footnote{Of the four sequences mentioned, $(a_n^{(k,m)})$ and $(d_n^{(k,m)})$ coincide when $k=1$, but are distinct whenever $k>1$. In general, $a_n^{(k,m)}=b_n^{(k,m)}+c_n^{(k,m)}$, as the latter two reflect the composition of the bacterial population. However, as the underlying combinatorial model is based on a representation of  Pascal's triangle, both $(b_n^{(k,m)})$ and $(c_n^{(k,m)})$ work out to be time-shifted versions of the sequence $(a_n^{(k,m)})$. } 
 \end{enumerate}
 \label{thm-1}
 \end{theorem}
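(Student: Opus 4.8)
The plan is to single out the division-count sequence $(d_n)$ as the master object and to obtain the three other sequences as elementary transforms of it. Two structural observations carry the argument. First, the founder is the \emph{only} individual in the entire process not created by a division, and it has lifetime $m$. Second, every genuine division (necessarily occurring at some time $\ge m$) creates exactly one child of lifetime $k$ and exactly one child of lifetime $m$. Together these give a clean ``birth ledger'': for each $n\ge m$ the number of lifetime-$k$ births at time $n$ and the number of lifetime-$m$ births at time $n$ are both equal to $d_n$; for $1\le n\le m-1$ both counts are $0$; and at $n=0$ there is a single lifetime-$m$ birth, the founder, and no lifetime-$k$ birth.

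I would prove item (2) first, since $(d_n)$ is the master sequence. A division at time $n$ is performed by an individual whose lifetime ends at $n$, i.e. either a lifetime-$k$ individual born at time $n-k$ or a lifetime-$m$ individual born at time $n-m$; substituting the birth ledger gives $d_n=d_{n-k}+d_{n-m}$ for every $n$ large enough that $n-m\ge 1$, so that the founder's anomalous birth at time $0$ never enters the count. Running the same enumeration for smaller $n$ reproduces the rest: at $n=m$ only the founder contributes, giving $d_m=1$; for $n<m$ there is nothing to divide, giving $d_n=0$. These are exactly the initial data in (2); note the homogeneous recurrence genuinely fails at $n=m$ (where it would return $d_{m-k}+d_0=0$), which is why the initial segment has to be specified one step past where the recurrence takes over.

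For items (1), (3), (4) I would translate ``alive in $[n,n+1)$'' into the same ledger. Since no individual ever dies and a division replaces one individual by two, the population jumps by exactly $d_n$ when the divisions of time $n$ take effect; hence $(a_n)$ equals $1$ (the founder's contribution) plus a running partial sum of the $d_j$, with the precise index offset fixed by the model's timing convention. Telescoping the recurrence of (2) then shows that $a_n-a_{n-k}-a_{n-m}$ is eventually constant in $n$, and a single direct evaluation -- in which the founder's ``$+1$'' is cancelled against the seed $d_m=1$ -- shows that constant is $0$, so $(a_n)$ satisfies \eqref{eq-GF} from the appropriate index on, with the stated initial block. Next, a lifetime-$k$ individual is alive in $[n,n+1)$ precisely when it was born during one of the $k$ time steps ending at (or just before) $n$, so $b_n$ is a length-$k$ sliding-window sum of the $d_j$, i.e. a difference of two index-shifts of $(a_n)$, which the recurrence for $(a_n)$ collapses to a \emph{single} shift of $(a_n)$; hence $(b_n)$ solves \eqref{eq-GF}, and tracking the founder's unique lifetime-$k$ child -- born at time $m$ and alive for $k$ consecutive intervals -- produces the block $b_0=\dots=b_{m-1}=0$, $b_m=\dots=b_{m+k-1}=1$. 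Symmetrically $(c_n)$ is a length-$m$ sliding-window sum of the lifetime-$m$ births, now counting the founder itself, so it too is a shift of $(a_n)$ and solves \eqref{eq-GF}; the founder's extra term widens its initial block to $c_0=\dots=c_{m+k-1}=1$, and summing the two windows recovers $a_n=b_n+c_n$.

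All of the mathematical content lies in the birth ledger and in the decomposition of a division according to the dividing individual's lifetime; everything after that is routine telescoping and re-indexing. I expect the only real work to be the edge bookkeeping -- correctly isolating the founder as the unique lifetime-$m$ individual not born from a division, and thereby pinning down both the exact index at which each of the four recurrences becomes valid (which is later for $(b_n)$ and $(c_n)$, reflecting that lifetime-$k$ and lifetime-$m$ individuals first appear only at time $m$) and the precise initial segments that have to be prescribed.
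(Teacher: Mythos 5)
Your plan is sound, but note that this paper never proves Theorem~\ref{thm-1} internally: the statement is quoted from the Supplemental Materials of \cite{BlitvicFernandez}, so there is no in-paper argument to compare against step by step. What can be said is that your route --- take $(d_n^{(k,m)})$ as the master sequence, set up the ``birth ledger'' (every division spawns exactly one lifetime-$k$ and one lifetime-$m$ child, the founder being the unique individual not born of a division), classify the divisions at time $n$ by the divider's lifetime to get $d_n=d_{n-k}+d_{n-m}$ for $n>m$, then realize $(a_n)$ as $1$ plus a partial sum of the $d_j$ and $(b_n)$, $(c_n)$ as length-$k$ and length-$m$ sliding-window sums --- is precisely the bookkeeping this paper itself invokes downstream, cf.\ \eqref{eq-a-sum-d}, the shift relation \eqref{eq-T-d}, and the footnote's $a_n=b_n+c_n$ and ``time-shifted versions of $(a_n)$'' remarks. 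So your argument is, at worst, a reconstruction of the intended proof rather than a genuinely different one, and all of its substantive steps (the failure of the recurrence for $(d_n)$ exactly at $n=m$, the telescoping that makes $a_n-a_{n-k}-a_{n-m}$ vanish once the founder's $+1$ cancels against $d_m=1$, the collapse of $b_n=a_n-a_{n-k}$ to a single shift $a_{n-m}$) check out.

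The one point you defer --- ``the precise index offset fixed by the model's timing convention'' --- is exactly where care is needed, and your own ledger exposes a wrinkle in the statement itself. Under any single convention for when the two children created at a division time are counted as alive, your windows give $a_m=b_m+c_m$; items (3) and (4) force $b_m=c_m=1$, hence $a_m=2$, not the $a_m=1$ written in item (1) (consistently, \eqref{eq-a-sum-d} with $d_m=1$ also gives $a_m=2$). So when you do the boundary bookkeeping you should either take the initial block for $(a_n)$ to be $a_0=\dots=a_{m-1}=1$, with $a_m=2$ then produced by the recurrence, or accept a one-step shift between the convention used for $(a_n)$ and that used for $(b_n),(c_n)$. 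This is a defect (or typo) in the stated initial conditions rather than in your method, but your write-up should resolve the convention explicitly rather than leave it as a floating offset, since otherwise the claimed initial block for item (1) cannot be verified as stated.
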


 Hence, several solutions to the abstract two-term recurrence \eqref{eq-GF} correspond to concrete combinatorial quantities, all of which are captured in the unifying framework of asymmetric branching. Furthermore, from Theorem~\ref{thm-1}, the generating function, Binet-type formula, and large-$n$ asymptotic expression can be straightforwardly computed for all four sequences. (See the Supplementary Materials to \cite{BlitvicFernandez}.) In particular, when $k$ and $m$ are relatively prime, we have that for large $n$, 
\begin{equation}a_n^{(k,m)}\sim \frac{(1-r_0^k)}{(1-r_0)(mr_0^{m}+kr_0^{k})}\,r_0^{-n},\label{eq-asymptotic}\end{equation}
where $r_0$ is the unique positive root of the characteristic equation $1-x^k-x^m=0$.
Furthermore, the fractions of the population corresponding to each subtype converge, as time gets large, to
\begin{equation}\lim_{n\to\infty}\frac{b_n^{(k,m)}}{a_n^{(k,m)}}=1-r_0^k=r_0^m\quad\quad\text{and}\quad\quad \lim_{n\to\infty}\frac{c_n^{(k,m)}}{a_n^{(k,m)}}=r_0^k.\label{eq-proprtions}\end{equation}

\section{The $(k,m)$-Pascal's triangle}

Rather than depicting asymmetric branching as a binary tree, as in Figure~\ref{fig-tree-Pascal}(a), we introduce a more compact representation by combining objects within a generation that have the same remaining lifetime. The result is Pascal's triangle that appears `skewed' compared to the standard representation. 
Specifically, the \emph{$(k,m)$-Pascal's triangle} is a collection of weighted, horizontal edges on the vertex set $\N_0\times \N_0$, where for each $x,y=0,1,2,\ldots$, the edge $[yk+xm,yk+(x+1)m)\times \{y\}$ has weight ${y+x-1\choose x-1}$. (See Figures~\ref{fig-tree-Pascal} and \ref{fig-ad}.) The equivalence between the two representations is established in the following: 

\begin{figure}\centering\includegraphics[scale=0.4]{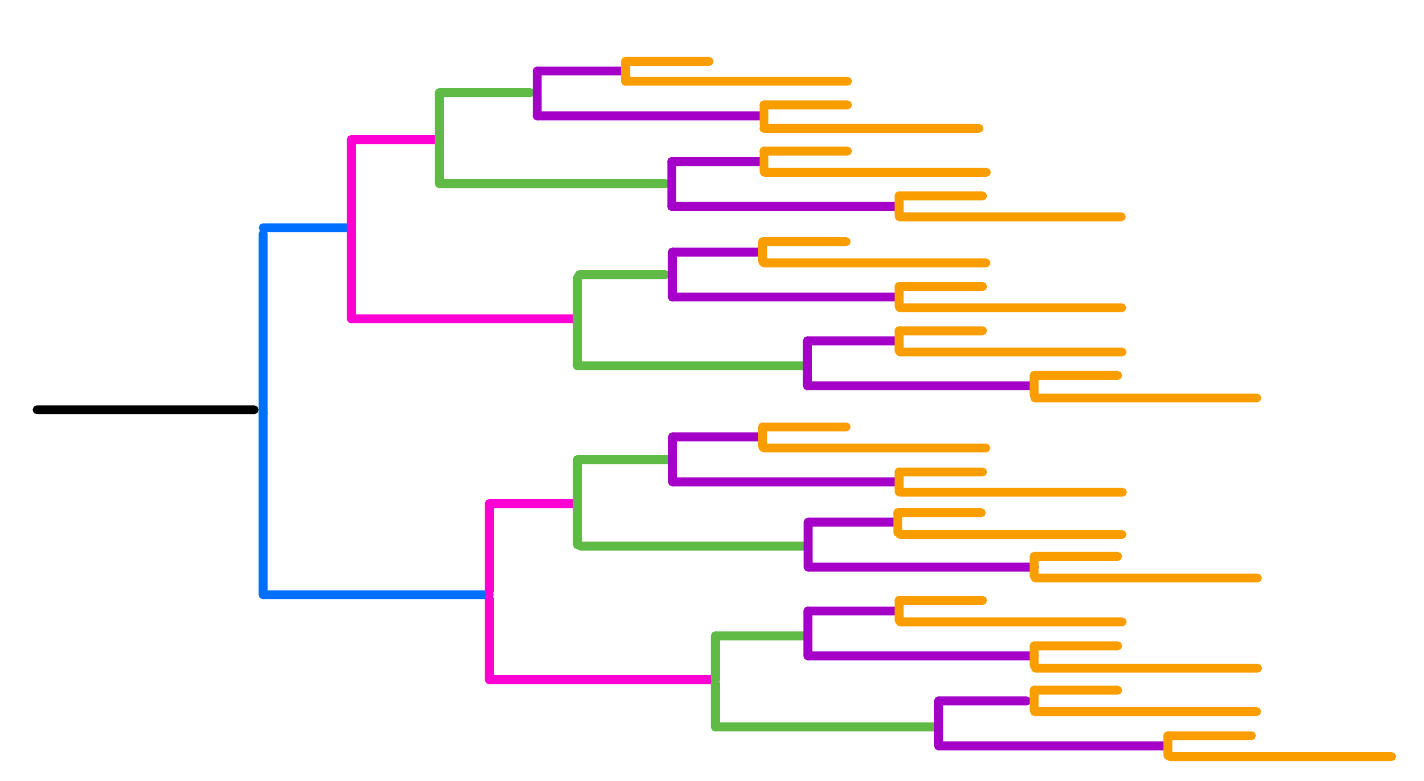}\includegraphics[scale=0.7]{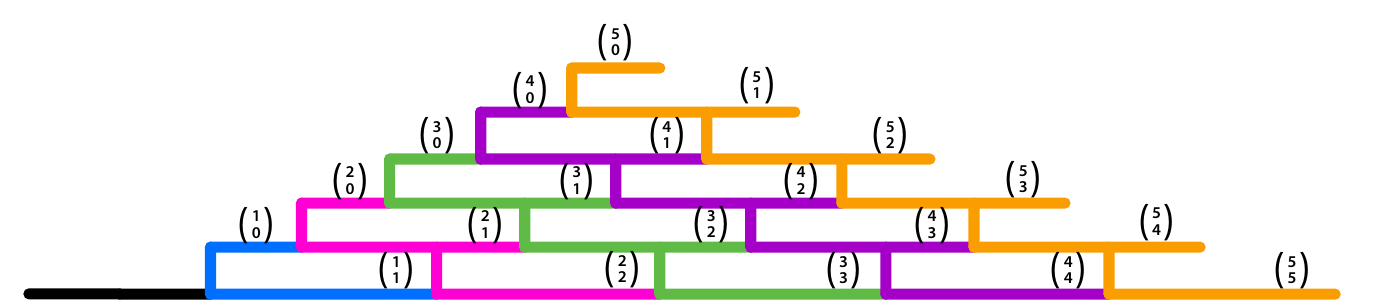}\centering\caption{ Five generations of bacterial division for $k=2,m=5$: (a) as a binary tree, (b) as the $(2,5)$-Pascal's triangle. Each color denotes a different generation}\label{fig-tree-Pascal}\end{figure}

\begin{lemma}
Fix $k,m\in\mathbb N$ with $k<m$. For all $n\geq m-k$, the number of bacteria alive at time $[n,(n+1))$, denoted by $a_n^{(k,m)}$, equals the sum of the weights of the edges intersecting the ordinate $x=n+m-k$ in the $(k,m)$-Pascal's triangle. 
\label{lemma-a-weights}
\end{lemma}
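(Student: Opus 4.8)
The plan is to set up a bijective/recursive correspondence between the binary-tree model of Section~2 and the edge-weighted object defined as the $(k,m)$-Pascal's triangle, and then to verify that the sum of weights along a vertical ordinate satisfies the same recurrence and initial conditions as $(a_n^{(k,m)})$. First I would make the geometry precise: a horizontal edge sits on the line $y=\text{const}$, spans the half-open interval $[yk+xm,\,yk+(x+1)m)$ in the first coordinate, and carries weight $\binom{y+x-1}{x-1}$. I would observe that, for a fixed abscissa value $N$, the edges crossing the ordinate $x=N$ are exactly those pairs $(x,y)\in\N_0\times\N_0$ with $yk+xm\le N<yk+(x+1)m$, i.e. $x=\lfloor (N-yk)/m\rfloor$ whenever $N-yk\ge 0$; call this weight-sum $W_N$. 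The claim of the lemma is then $a_n^{(k,m)}=W_{n+m-k}$ for all $n\ge m-k$.

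Next I would identify what an ordinate crossing means biologically. Reading off Figure~\ref{fig-tree-Pascal}, each horizontal edge on row $y$ represents a cohort of bacteria that have undergone $y$ ``fast'' divisions (the lifetime-$k$ branchings) in their ancestral line, together with however many ``slow'' ($m$) branchings; the weight $\binom{y+x-1}{x-1}$ counts the number of lineage-paths producing such a bacterium, which is precisely the relevant entry of the (ordinary) Pascal's triangle since such a path is a word in two letters. The horizontal extent of an edge of length $m$ encodes the fact that a bacterium about to be born into the slow branch of such a cohort remains alive for a full $m$ time units. Thus I would argue that a vertical line $x=N$ cuts an edge if and only if there is a bacterium of the corresponding cohort alive during the time interval indexed by $N$ (after the fixed shift by $m-k$, which aligns the $(k,m)$-triangle's coordinate origin with the birth-at-time-$0$, first-division-at-time-$m$ convention), and that the number of such bacteria is exactly the edge's weight. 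Summing over all cohorts gives $W_{n+m-k}=a_n^{(k,m)}$.

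To keep the argument self-contained and avoid hand-waving over the picture, I would actually prove $W_N=W_{N-k}+W_{N-m}$ for $N$ large enough, together with the initial values $W_N=1$ for $m-k\le N\le 2m-k-1$ (equivalently $a_n^{(k,m)}=1$ for $0\le n\le m$, matching Theorem~\ref{thm-1}(1)), and then invoke uniqueness of sequences satisfying \eqref{eq-GF} with given initial data. The recurrence for $W_N$ would come from the Pascal rule $\binom{y+x-1}{x-1}=\binom{y+x-2}{x-2}+\binom{y+x-2}{x-1}$: shifting $N$ down by $m$ moves the crossed edge on each row $y$ one unit left in $x$ (so $x\mapsto x-1$), while shifting $N$ down by $k$ moves the crossing pattern between rows ($y\mapsto y-1$) in a way that, together, reconstitutes the two-term Pascal recursion after a careful bookkeeping of which rows acquire or lose a crossed edge at the boundary. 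The main obstacle, and the step I would spend the most care on, is exactly this boundary bookkeeping: controlling, for a given $N$, the finitely many rows $y$ with $N-yk\ge 0$, checking that decrementing $N$ by $k$ or by $m$ does not create a spurious crossing with negative $x$ or drop a legitimate one, and confirming that the range $n\ge m-k$ in the statement is precisely what is needed for these edge effects to cancel. Once that is handled, matching the base cases against Theorem~\ref{thm-1}(1) is a short computation, and the lemma follows.
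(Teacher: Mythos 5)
Your overall strategy---show that the column sums $W_N$ of the $(k,m)$-Pascal's triangle satisfy the recurrence \eqref{eq-GF}, match $m$ consecutive initial values against Theorem~\ref{thm-1}(1), and conclude by uniqueness---is a legitimate route and genuinely different from the paper's. The paper proves the lemma constructively: it counts, generation by generation, how many bacteria are born at each time $(1+\alpha)m+(i-\alpha)k$, maps each bacterium to a horizontal (multi-)edge spanning its lifetime, obtains the binomial multiplicities of these multi-edges from Pascal's rule, and then merges contiguous equal-weight edges to produce the $(k,m)$-Pascal's triangle; the identification of column sums with the population count is built into the construction, with no appeal to the recurrence, to uniqueness of solutions, or to Theorem~\ref{thm-1}. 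Your argument instead treats the triangle as given and verifies the equality of two sequences; that is tidy in principle, but less self-contained, and it hides where the weights and the time shift actually come from.

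The genuine gap is that your write-up defers exactly the steps that carry the content. The recurrence $W_N=W_{N-k}+W_{N-m}$ is only promised: the row-wise bookkeeping (decreasing $N$ by $m$ decrements the crossed index $x_y(N)=\lfloor (N-yk)/m\rfloor$ on each row, decreasing $N$ by $k$ turns row $y$'s crossing into row $y+1$'s, and Pascal's rule recombines the two) does go through, but only after one fixes a convention for the degenerate edges, namely the $x=0$ edges of weight $\binom{y-1}{-1}$ and the mother edge $[0,m)\times\{0\}$, since these govern the boundary terms on row $0$ and on the rows with $N-yk<m$; you never address them. More seriously, the base cases you assert are not the ``short computation'' you claim: with the crossing condition and weights exactly as you write them, the column at abscissa $2m-k-1$ already meets two edges of weight $1$ whenever $m>2k$---for the paper's running example $(k,m)=(2,5)$, the line $x=7$ crosses $[5,10)\times\{0\}$ (weight $\binom{0}{0}$) and $[7,12)\times\{1\}$ (weight $\binom{1}{0}$), so your $W_7=2$, not $1$ as your claimed initial window $m-k\le N\le 2m-k-1$ requires. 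So carrying out your plan forces you to pin down the degenerate-weight conventions and the precise time shift before the recurrence-plus-initial-conditions argument can close; as it stands, both halves of that argument are asserted rather than proved. This is precisely the bookkeeping the paper's constructive proof sidesteps by deriving the weights and the shift simultaneously from the branching process.
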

\begin{proof}

Consider all the bacteria belonging to the $i$th generation, for $i\in\mathbb N$, in a $(k,m)$-heterogeneous system, recalling that the $0$th generation consists of a single mother bacterium of fixed lifetime $m$. By considering the possible lifetimes of each of its $2^{i-1}$ predecessors, we observe that each bacterium in the $i$th generation must have been born at one of the following times: $m+ik,2m+(i-1)k, 3m+(i-2)k,\ldots,im+k,(i+1)m$. Furthermore, there are $2{i\choose 0}$ bacteria born at time $m+ik$, $2{i\choose 1}$ bacteria born at time $2m+(i-1)k$, $2{i\choose 2}$ bacteria born at time $3m+(i-2)k$, and generally $2{i\choose \alpha}$ bacteria born at time $(1+\alpha)m+k,(i-\alpha)m$, where half of the bacteria born at each time have length $k$ and half have length $m$.

Now, map the mother bacterium of generation zero to the edge $[0,k)\times\{0\}$ of multiplicity one. Thereafter, for each bacterium in the $i$th generation that has lifetime $k$ and is born at time $\alpha m+(i-\alpha)k$ (for $\alpha=0,\ldots,i$), map it to a distinct (multi-)edge $[\alpha m+(i-\alpha)k,\alpha m+(i-\alpha+1)k)\times\{i-\alpha\}$. In contrast, for each bacterium in the $i$th generation born at time $\alpha m+(i-\alpha)k$ and of lifetime $m$, map it to the pair of (multi-)edges $[\alpha m+(i-\alpha)k,(2+\alpha)m+(i-\alpha-1)k)\times\{i-\alpha-1\}$ and $[(2+\alpha)m+(i-\alpha-1)k,(2+\alpha)m+(i-\alpha)k)\times\{i-\alpha-1\}$. Then, each edge $[\alpha m+(i-\alpha)k,(2+\alpha)m+(i-\alpha-1)k)\times\{i-\alpha-1\}$ has multiplicity ${i\choose \alpha}$, as it's in the image under the previously described map of all bacteria of lifetime $m$ born at time  $\alpha m+(i-\alpha)k$. Moreover, each edge $[\alpha m+(i-\alpha)k,\alpha m+(i-\alpha+1)k)\times\{i-\alpha\}$ has multiplicity 
\begin{equation}{i\choose \alpha}+{i\choose \alpha -1}={i+1 \choose \alpha }\label{eq-pascal-rule}\end{equation}
as it may be the image of a bacterium of lifetime $k$ born at time $\alpha m+(i-\alpha)k$ or that of a bacterium of lifetime $m$ born at time $(2+\alpha)m+(i-\alpha-1)k$. Note that \eqref{eq-pascal-rule} is the well known \emph{Pascal's rule} ${n+1\choose k}={n\choose k}+{n\choose k-1}$, which holds for all integers $1\leq k\leq n+1$.

The bacterium born at time $0$ has lifetime $m$, but is represented by a line segment of length $k$. Barring the $0$th generation, each bacterium's lifetime is otherwise represented in the resulting diagram by a line segment of the corresponding length and the multiplicity of the edges corresponds to the number of bacteria represented by each edge. Hence, the number of bacteria alive at time $n$ is the number of edges (counted with multiplicities) intersected by the line $x=n-m+k$.

The $(k,m)$-Pascal's triangle is now obtained by turning the multi-edges into weighted edges and combining contiguous edges that have equal weights. Namely, in the $0$th generation, combine the edge $[0,k)\times\{0\}$ (multiplicity $1$, generation $0$) with the edge $[k,m)$ (multiplicity $1$, generation $1$) to form the edge $[0,m)$ of weight $1$. Thereafter, for each $i\in\mathbb N$ and $\alpha=0,\ldots,i$, map the multi-edges
$[\alpha m+(i-\alpha)k,\alpha m+(i-\alpha+1)k)\times\{i-\alpha\}$ (multiplicity ${i+1 \choose \alpha }$, generation $i$) and 
$[\alpha m+(i-\alpha+1)k,(2+\alpha)m+(i-\alpha)k)\times\{i-\alpha\}$ (multiplicity ${i+1\choose \alpha}$, generation $i+1$) to the single edge $[\alpha m+(i-\alpha)k,(2+\alpha)m+(i-\alpha)k)\times\{i-\alpha\}$ of weight ${i+1\choose \alpha}$. The reader may now establish that each edge in the $(k,m)$-Pascal's triangle is uniquely recovered in this manner. Moreover, each edge thus produced carries the correct weight.
\end{proof}

\section{The sum of diagonals in (classical) Pascal's triangle}

It is well known that the $n$th Fibonacci number gives the sum of the elements of Pascal's triangle $(x,y)\mapsto {x+y\choose x}$ that lie on the diagonal $x+2y=n$ (see Figure~\ref{fig-pascal-diagonals}). This is the combinatorial interpretation of the identity
\begin{equation}f_n=\sum_{j=0}^{\lfloor\frac{ n-1}{2}\rfloor}{n-j-1\choose j},\label{eq-fib-binomial}\end{equation}
where $f_n$ is the $n$th Fibonacci number. In \cite{Green1968}, Green  generalized \eqref{eq-fib-binomial} to 
\begin{equation}T_n^{(1,m)} =\sum_{j=0}^{\lfloor\frac{n}{m}\rfloor}{n-(m-1)j\choose j},\label{eq-Green}\end{equation}
where  $m$ is a fixed integer greater than 1 and $T_n^{(1,m)}$ is the analogous sum over the diagonal $x+my=n$ ($n\in\mathbb N$). More generally, for the sums $T_n^{(k,m)}$ over the diagonals $kx+my=n$ (for fixed $k,m\in\mathbb N, k\neq m$), an expression of the form \eqref{eq-fib-binomial} can be written down for each of the values of the remainder of the division $n/k$ (or $n/m$). This is essentially worked out in Section E of \cite{Green1968}, though the result is somewhat cumbersome. A subset of these diagonals is characterized in \cite{Dickinson1950}.\footnote{The reader should note that Dickinson considers a different representation of Pascal's triangle in $\mathbb N_0\times \mathbb N_0$, namely, $(x,y)\mapsto {x\choose y}$, rather than $(x,y)\mapsto {x+y\choose y}$, considered by Green. When transformed into the framework of Green, Dickinson's diagonals are of the form $y=(-k/m)x+nk/m+\frac{m}{m-k}$. Compared to the case of interest, that is, $y=(-k/m)x+n/m$, Dickinson's $y$-intercepts are offset by $\frac{m}{m-k}$ and, more to the point, are spaced apart by multiples of $k/m$ rather than $1/m$.}

Drawing on the the results of the previous two sections, we obtain a general form of \eqref{eq-Green}, namely, a binomial identity for the sequence $(T_n^{(k,m)})_n$ in the case of $k$ and $m$ general. 
\begin{theorem}
 Fix $k,m\in\mathbb N$ with $\gcd(k,m)=1$. For all $n=0,1,2,\ldots,$
   \begin{equation}
  T_n^{(k,m)}=\sum_{j=0}^{\lfloor (n+m)/k\rfloor} {\lfloor \frac{n-jk}{m}\rfloor+j+1\choose j+1}-\sum_{j=0}^{\lfloor (n+m-1)/k\rfloor} {\lfloor \frac{n-1-jk}{m}\rfloor+j+1\choose j+1}.\label{eq-diagonals-general-2}
 \end{equation}
 \end{theorem}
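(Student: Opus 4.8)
The plan is to reduce \eqref{eq-diagonals-general-2} to a closed form for the \emph{cumulative} diagonal sum
\[
S_n:=\sum_{\substack{x,y\in\N_0\\ kx+my\le n}}\binom{x+y}{x}=\sum_{n'=0}^{n}T_{n'}^{(k,m)}\qquad(S_{-1}:=0).
\]
Since $T_n^{(k,m)}$ is, by definition, the contribution to $S_n$ of the boundary diagonal $kx+my=n$, one has $T_n^{(k,m)}=S_n-S_{n-1}$, and the right-hand side of \eqref{eq-diagonals-general-2} is visibly this same expression evaluated at $n$ minus its value at $n-1$. So it would suffice to establish
\[
S_n=\sum_{j=0}^{\lfloor (n+m)/k\rfloor}\binom{\lfloor (n-jk)/m\rfloor+j+1}{j+1}\qquad(n\ge 0),
\]
together with the immediate observation that this same formula returns $0$ at $n=-1$, every summand being then of the form $\binom{j}{j+1}=0$.

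To evaluate $S_n$ I would sum over the lattice points of the triangular region $\{kx+my\le n\}$ by fixing the first coordinate: for $0\le x\le\lfloor n/k\rfloor$ the admissible values of $y$ are exactly $0,1,\ldots,\lfloor (n-kx)/m\rfloor$, and for $x>\lfloor n/k\rfloor$ there are none. The hockey-stick identity $\sum_{y=0}^{Y}\binom{x+y}{x}=\binom{x+Y+1}{x+1}$ then collapses each inner sum to a single binomial coefficient, yielding
\[
S_n=\sum_{x=0}^{\lfloor n/k\rfloor}\binom{x+\lfloor (n-kx)/m\rfloor+1}{x+1}.
\]
(The stacking of the coefficients $\binom{x+y}{x}$ into such a partial sum is the same Pascal's-rule bookkeeping as in \eqref{eq-pascal-rule} and Lemma~\ref{lemma-a-weights}, now run along a diagonal of the classical triangle rather than along an ordinate of the $(k,m)$-triangle.)

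The final step would be to enlarge the range of summation from $\lfloor n/k\rfloor$ to $\lfloor (n+m)/k\rfloor$. For any $x$ with $\lfloor n/k\rfloor<x\le\lfloor (n+m)/k\rfloor$ one has $n<kx\le n+m$, hence $-m\le n-kx<0$, hence $\lfloor (n-kx)/m\rfloor=-1$, so the summand is $\binom{x}{x+1}=0$; the extra terms thus contribute nothing, the closed form for $S_n$ follows, and \eqref{eq-diagonals-general-2} is obtained via $T_n^{(k,m)}=S_n-S_{n-1}$.

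The only genuinely delicate point — the one I expect to have to nail down carefully — is the \emph{exact} value $\lfloor (n+m)/k\rfloor$ of the upper summation limit: any strictly larger cut-off would admit indices with $\lfloor (n-jk)/m\rfloor\le-2$, giving binomial coefficients with negative top entry that need not vanish. Relatedly, one must check that every $\binom{a}{b}$ occurring in \eqref{eq-diagonals-general-2} has $a\ge 0$ (it does: $jk\le n+m$ forces $\lfloor (n-jk)/m\rfloor\ge-1$, so $a\ge j\ge 0$, and likewise for the shifted sum), so that the plain combinatorial convention $\binom{a}{b}=0$ for $0\le a<b$ suffices. Everything else is routine bookkeeping; in particular the argument does not seem to use $\gcd(k,m)=1$, which enters only in the alternative Diophantine descriptions of the individual diagonal sums in \cite{Green1968,Dickinson1950}.
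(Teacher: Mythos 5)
Your proposal is correct, but it takes a genuinely different route from the paper's. The paper never evaluates the cumulative sum directly: it invokes Green's recurrence $T_n^{(k,m)}=T_{n-k}^{(k,m)}+T_{n-m}^{(k,m)}$, matches initial values to identify $T_{n-m}^{(k,m)}$ with the division counts $d_n^{(k,m)}$ of Theorem~\ref{thm-1} via \eqref{eq-T-d}, sums these to the population counts $a_n^{(k,m)}$ via \eqref{eq-a-sum-d}, evaluates $a_n^{(k,m)}$ as an edge-weight sum in the $(k,m)$-Pascal's triangle (Lemma~\ref{lemma-a-weights}) to get the cumulative identity \eqref{eq-diagonals-general}, and only then takes successive differences. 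You instead compute $S_n=\sum_{kx+my\le n}\binom{x+y}{x}$ directly, column by column, collapsing each column with the hockey-stick identity, then pad the summation range up to $\lfloor(n+m)/k\rfloor$ after checking that the extra terms are all of the form $\binom{j}{j+1}=0$ (and that no negative upper entries can occur within that range), and finally difference; all of these steps are sound, and your care about the exact cut-off is exactly the right delicate point. What each approach buys: the paper's route is what ties the identity to the branching model and to the $(k,m)$-triangle, which is the point of that paper, whereas yours is shorter and self-contained --- it needs neither Theorem~\ref{thm-1}, nor Lemma~\ref{lemma-a-weights}, nor Green's recurrence --- and it makes explicit that the identity \eqref{eq-diagonals-general-2} itself does not require $\gcd(k,m)=1$, which the paper's argument does use (through Green's recurrence and the matching of initial segments). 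As you observe, your hockey-stick aggregation is the same Pascal's-rule mechanism that underlies the $(k,m)$-triangle, so the two arguments are morally parallel but logically independent; your closed form for $S_n$ is in effect a cleaner, shifted version of the paper's intermediate identity \eqref{eq-diagonals-general}, and any additive-constant or indexing discrepancies between the two bookkeeping conventions disappear upon differencing, which is all the theorem uses.
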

 \begin{proof}
By construction,
\begin{equation}T_n^{(k,m)}=\sum_{\substack{x,y\in\{0,1,2,\ldots\}\\ \text{s.t. } kx+my=n}}{{x+y}\choose x}.\label{eq-T}\end{equation}
Green \cite{Green1968} showed that, for fixed $k,m\in\mathbb N$ with $\gcd(k,m)=1$,
$T_n^{(k,m)}=T_{n-k}^{(k,m)}+T_{n-m}^{(k,m)}$ for $n\geq m$.
By inspection, for $n=0,\ldots,m-1$ 
\begin{equation}T_n^{(k,m)}=\mathbbm{1}_{k|n}\quad\text{ and }\quad T_0^{(k,m)}=1,\end{equation}
where $\mathbbm{1}_{k|n}=1$ when $k$ divides $n$ and $\mathbbm{1}_{k|n}=0$ otherwise. (For a worked example for $k=2,m=5$, see Figure 7 of \cite{fibonacci-Pascal}.)  By Theorem~\ref{thm-1}, the  $(T_n^{(k,m)})$ is a shifted version of the sequence $(d_n^{(k,m)})$. Indeed, write $m=\alpha k+\beta$ where $\alpha$ and $\beta$ are positive integers (the quotient and the remainder of the division). Then, by comparing the values of $T_n^{(k,m)}$ for $0\leq n\leq m$ to the values of $d_n^{(k,m)}$ for $m\leq n\leq 2m$, we see that $d_n^{(k,m)}=T_{n-m}^{(k,m)}$ for all $m\leq n\leq 2m$. (Specifically, for $n\leq m$, the sequence $(T_n^{(k,m)})$ has $1$'s in positions $0,k,2k,\ldots,\alpha k, m$, and zeros otherwise. For $m\leq n\leq 2m$, the sequence $(d_n^{(k,m)})$ has $1$'s in positions $m,m+k,m+2k,\ldots, m+\alpha k,2m$, and zeros otherwise.) Since the sequences satisfy the same recurrence, 
we obtain that for all $n\geq m$,
\begin{equation}d_n^{(k,m)}=T_{n-m}^{(k,m)}.\label{eq-T-d}\end{equation}
By Theorem~\ref{thm-1}, $(d_n^{(k,m)})_n$ gives the consecutive differences of the sequence $(a_n^{(k,m)})_n$, that is,
\begin{equation}a_n^{(k,m)}=1+\sum_{i=1}^n d_i^{(k,m)}.\label{eq-a-sum-d}\end{equation}
 In turn, by Lemma~\ref{lemma-a-weights}, the sequence $(a_n^{(k,m)})_n$ sums over vertical slices of the $(k,m)$-Pascal's triangle. Combining \eqref{eq-T-d} and \eqref{eq-a-sum-d}, we obtain that for all $n\geq m$, \begin{equation}\sum_{i=m}^n T_{i-m}^{(k,m)}=-1+\sum_{j=0}^{\lfloor n/k\rfloor} {\lfloor \frac{n-jk}{m}\rfloor+j\choose j+1}.\label{eq-diagonals-general}\end{equation}
The expression \eqref{eq-diagonals-general-2} now follows by taking successive differences.
\end{proof} 
 
Expressions of the form \eqref{eq-diagonals-general-2} or \eqref{eq-diagonals-general} do not appear to have previously been found beyond the special case $k=1$. In particular, \eqref{eq-diagonals-general-2} is considerably more explicit than Green's main result (see (1), E(a.3) and E(a.4) in \cite{Green1968}), which hinges on computing the minimal and maximal non-negative integer solutions of the equation $kx+my=n$, and is more general than that of \cite{Dickinson1950}, which considers a subset of the diagonals.

\section{Conclusion}

In an area where the problems are treated in either great generality, as in \eqref{eq-gen-rec}, so that the combinatorial and geometric distinctions between various specializations are lost, or in such specificity that different parameter choices and different initial conditions in \eqref{eq-GF} give rise to entirely different combinatorial problems, the results presented here offer a satisfyingly balanced perspective. Juxtaposing the interpretation of \eqref{eq-GF} in terms of asymmetric branching with the representation of the underlying sequences in terms of Pascal's triangle yields a new identity for sums over arbitrary diagonals in Pascal's triangle, extending beyond the previously studied $k=1$ special case. As a result, though the original motivation for this model was for bacterial growth, it is likely to have considerably broader applications, such as in operations research \cite{Conolly1981} and computer science \cite{Peterson1977}.\\




\noindent {\bf Acknowledgment}\, The authors are grateful to Roman Stocker and Einar Steingr{\'i}msson for helpful conversations and to Martin Ackermann for his hospitality during the first author's sabbatical leave.

\begin{figure}
\centering
\begin{subfigure}{.6\textwidth}
  \centering
\includegraphics[width=\linewidth]{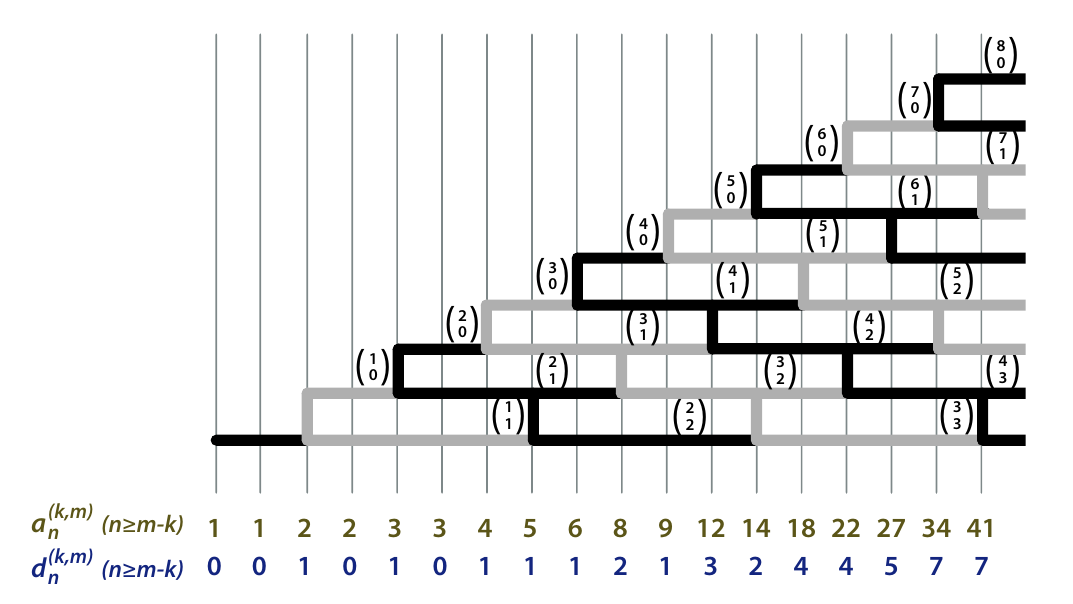}
  \caption{The sequences $(a_n^{(k,m)})$ and $(d_n^{(k,m)})$  for $k=2,m=5$.}
\label{fig-ad}
\end{subfigure}%
\begin{subfigure}{.4\textwidth}
  \centering
  \includegraphics[width=\linewidth]{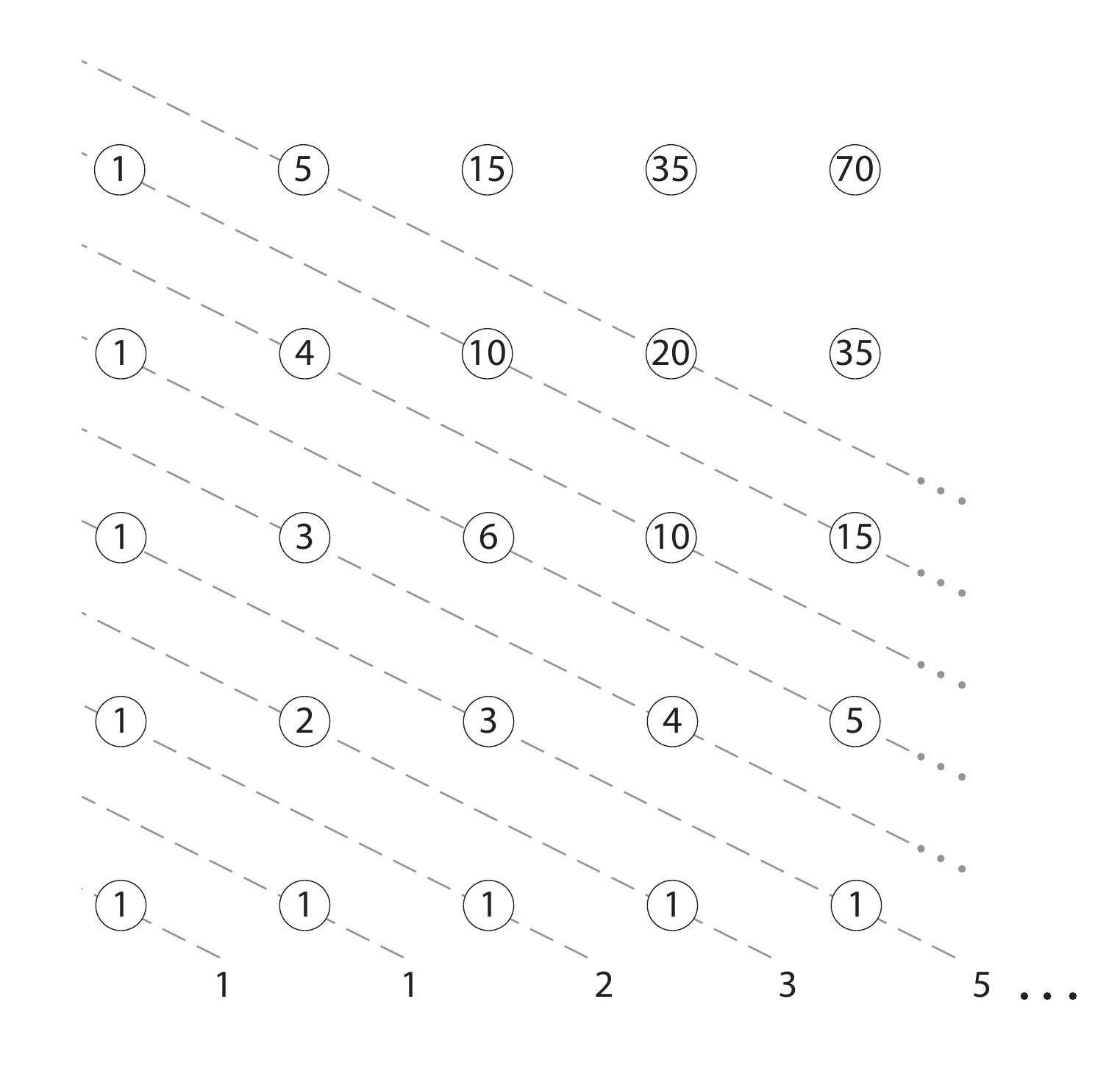}
  \caption{$(d^{(k,m)})$ as sums over diagonals $kx+my=n$ in Pascal's triangle. The case $k=1,m=2$ gives the Fibonacci numbers.}
\label{fig-pascal-diagonals}
\end{subfigure}
\caption{}
\label{fig:test}
\end{figure}

\bibliographystyle{alpha}
\bibliography{phenotypic_refs2} 
\end{document}